\documentclass[12pt]{article}
\usepackage{amssymb,amsmath,amsthm,epsfig}
\usepackage[margin=25mm]{geometry}
\usepackage{float}
\usepackage{sectsty}
\usepackage{lipsum}
\usepackage[labelfont=bf]{caption}
\allsectionsfont{\centering\MakeUppercase}
\newtheorem{thm}{Theorem}



\renewenvironment{proof}{{\bfseries Proof.}}{}

\makeatletter
\renewcommand{\maketitle}{\bgroup\setlength{\parindent}{0pt}
\begin{flushleft}
  \textbf{\@title}

  \@author
\end{flushleft}\egroup
}
\makeatother

\begin{document}
\title{\bf \Large A note on the minimum reduced reciprocal Randi\'{c} index of $n$-vertex unicyclic graphs}

\author{AKBAR ALI$^{\dag,\ddag,*}$ \& AKHLAQ AHMAD BHATTI$^{\dag}$\\
$^{\dag}$Department of Sciences \& Humanities, National University of Computer \& Emerging Sciences, B-Block, Faisal Town, Lahore-Pakistan.\\
$^{\ddag}$Department of Mathematics, University of Gujrat, Gujrat-Pakistan.\\
$^{*}$Corresponding author: akbarali.maths@gmail.com}

\maketitle

\renewcommand{\abstractname}{ABSTRACT}
\begin{abstract}
\noindent The graph having the minimum reduced reciprocal Randi\'{c} index is characterized among the class of all unicyclic graphs with fixed number of vertices.
\end{abstract}
{\bf Keywords:} Topological index; reduced reciprocal Randi\'{c} index; unicyclic graph.

\section*{Introduction}

All the graphs considered in the present study are simple, finite, undirected and connected.  The vertex set and edge set of a graph $G$ will be denoted by $V(G)$ and $E(G)$ respectively. The degree of a vertex $u\in V(G)$ and the edge connecting the vertices $u$ and $v$ will be denoted by $d_{u}$ and $uv$ respectively. Undefined notations and terminologies from (chemical) graph theory can be found in (Harary, 1969; Trinajsti\'{c}, 1992).

Topological indices are numerical parameters of a graph which are invariant under graph isomorphisms. Randi\'{c} (1975) proposed the following topological index:
\[R(G)=\displaystyle\sum_{uv\in E(G)}(d_{u}d_{v})^{-\frac{1}{2}}\]
for measuring the extent of branching of the carbon-atom skeleton of saturated hydrocarbons and he named it as \textit{branching index}. Nowadays, this topological index is also known as \textit{connectivity index} and the \textit{Randi\'{c} index}. According to Gutman (2013), ``the Randi\'{c} index is the most investigated, most often applied, and most popular among all topological indices. Hundreds of papers and a few books are devoted to this topological index''.

On the other hand, many physico-chemical properties of chemical structures are dependent on the factors different from branching. In order to take these factors into account, Estrada \textit{et al.} (1998) introduced a modified version of the Randi\'{c} index and called it as \textit{atom-bond connectivity (ABC)} index. This index is defined as:
\[ABC(G)=\sum_{uv\in E(G)}\sqrt{\frac{d_{u}+d_{v}-2}{d_{u}d_{v}}}.\]
Details about the chemical applicability and mathematical properties of this index can be found in the survey (Gutman, 2013), recent papers (Ahmadi \textit{et al.}, 2014; Dimitrov, 2014; Goubko \textit{et al.}, 2015; Palcios, 2014; Raza \textit{et al.}, 2015) and the references cited therein.

Inspired by work on the $ABC$ index Furtula, Graovac \& Vuki$\check{c}$evi$\acute{c}$ (2010) gave the following modified version of the $ABC$ index (and hence a modified version of Randi\'{c} index) under the name \textit{augmented Zagreb index} ($AZI$):
\[AZI(G)=\displaystyle\sum_{uv\in E(G)}\left(\frac{d_{u}d_{v}}{d_{u}+d_{v}-2}\right)^{3}.\]
The prediction power of $AZI$ is better than $ABC$ index in the study of heat of formation for heptanes and octanes (Furtula \textit{et al.}, 2010). Details about this index can be found in the survey (Gutman, 2013), recent papers (Ali, Bhatti \& Raza, 2016; Ali, Raza \& Bhatti, 2016; Huang \& Liu, 2015; Zhan \textit{et al.}, 2015) and the references cited therein.

In (Manso \textit{et al.}, 2012), a new topological index (namely \textit{Fi} index) was proposed to predict the normal boiling point temperatures of hydrocarbons. In the mathematical definition of \textit{Fi} index two terms are present. Gutman, Furtula \& Elphick (2014), recently considered one of these terms which is given below:
\[RRR(G)=\sum_{uv\in E(G)}\sqrt{(d_{u}-1)(d_{v}-1)},\]
and they named it as \textit{reduced reciprocal Randi\'{c}} ($RRR$) \textit{index}. In the current study, we are concerned with this recently introduced modified version of the Randi\'{c} index.
In order to get some preliminary information on whether this index possess any potential applicability in chemistry (especially in QSPR/QSAR
studies) Gutman, Furtula \& Elphick (2014) tested the correlating ability of several well known degree based topological indices (along with $RRR$ index) for the case of standard heats (enthalpy) of formation and normal boiling points of octane isomers, and they concluded that the $AZI$ and $RRR$ index has the best and second-best (respectively) correlating ability among the examined topological indices (it is worth mentioning here that, among the examined topological indices $ABC$ was also included which was the second-best degree based topological index according to the earlier study (Gutman \& To\v{s}ovi\'{c}, 2013)). Hence it is meaningful to study the mathematical properties of the $RRR$ index, especially bounds and characterization of the extremal elements for different graph classes. In (Gutman \textit{et al.}, 2014), the structure of $n$-vertex tree having maximum $RRR$ index and the extremal $n$-vertex graphs with respect to $RRR$ index were reported.

An $n$-vertex (connected) graph $G$ is unicyclic if it has $n$ edges. Some extremal results for the unicyclic graphs can be found in the papers (Gan \textit{et al.}, 2011; Gao \& Lu, 2005; Pan \textit{et al.}, 2006; Zhan \textit{et al.}, 2015). The main purpose of the present note is to characterize the $n$-vertex unicyclic graph having minimum $RRR$ index over the collection of all $n$-vertex unicyclic graphs.

\section*{Main Result}

Denote by $S_{n}^{+}$ the unique unicyclic graph obtained from the star graph $S_{n}$ by adding an edge between any two pendent vertices.
Many topological indices (e.g. $ABC$ index, $R$ index, $AZI$ etc.) which have $S_{n}$ as an extremal graph over the set of all $n$-vertex trees, have also $S_{n}^{+}$ as an extremal graph over the set of all $n$-vertex unicyclic graphs. However, different approaches required to prove these results. From the definition of $RRR$ index, it can be easily seen that $RRR(T_{n})\geq RRR(S_{n})=0$ where $T_{n}$ is any $n$-vertex tree. Is it true that the graph $S_{n}^{+}$ has the minimum $RRR$ index over the set of all $n$-vertex unicyclic graphs? The answer is not positive; for the $n$-vertex unicyclic graph $H_{n}^{+}$ (depicted in Fig. \ref{f1}(b)), one have $RRR(H_{n}^{+})=1+\sqrt{2}(2+\sqrt{n-4})$ but on the other hand $RRR(S_{n}^{+})=1+2\sqrt{n-2}$ and
\[RRR(S_{n}^{+})
\begin{cases}
       <RRR(H_{n}^{+}) & \text{if \ $5\leq n\leq19$,}\\
       =RRR(H_{n}^{+}) & \text{if \ $n=20$,}\\
       >RRR(H_{n}^{+}) & \text{if \ $n\geq21$.}
\end{cases}
\]
\renewcommand{\figurename}{Fig.}
\begin{figure}[H]
   \centering
    \includegraphics[width=3.9in, height=0.9in]{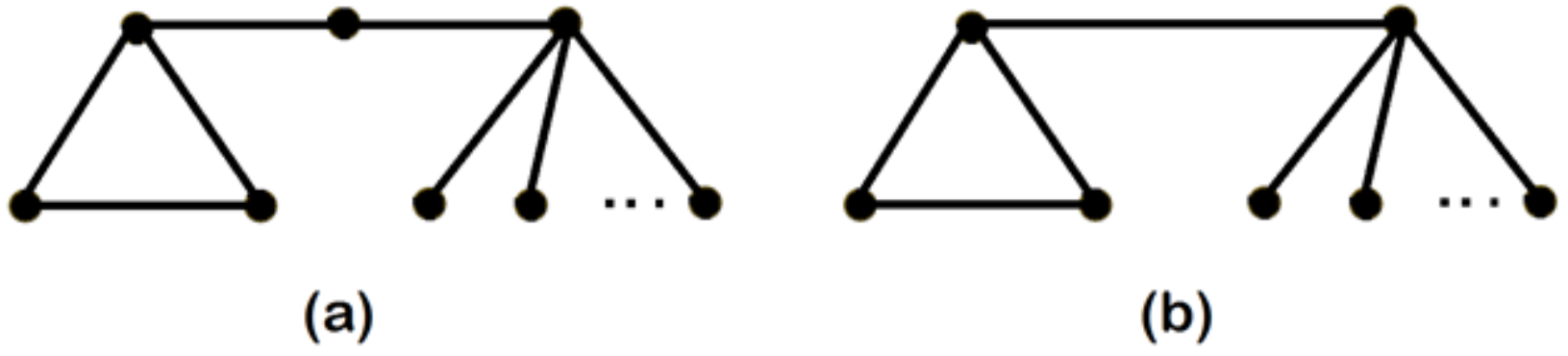}
    \caption{(a) The $n$-vertex unicyclic graph $H_{n}$ where $n$ is at least 6. (b) The $n$-vertex unicyclic graph $H_{n}^{+}$ where $n$ is at least 5.}
    \label{f1}
     \end{figure}

In the following theorem we characterize the $n$-vertex unicyclic graph having minimum $RRR$ index over the collection of all $n$-vertex unicyclic graphs for $n\geq4$.
\begin{thm}\label{thm1}
For any unicyclic graph $U_{n}$ where $n\geq4$, the following inequalities hold:
\[RRR(U_{n})
\begin{cases}
       \geq 1+2\sqrt{n-2} & \text{if \ $4\leq n\leq16$,}\\
       \geq 1+3\sqrt{2}+\sqrt{n-5} & \text{if \ $n\geq17$.}
\end{cases}
\]
The first equality holds if and only if $U_{n}\cong S_{n}^{+}$ and the second equality holds if and only if $U_{n}\cong H_{n}$ (where $H_{n}$ is shown in Fig. \ref{f1}(a)).
\end{thm}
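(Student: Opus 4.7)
The plan is to exploit that pendant edges contribute nothing to $RRR$, which lets us pass to the non-pendant skeleton $H = U_n[V_{\ge 2}]$. If $m$ denotes the number of pendants of $U_n$, then an edge count (each pendant removes one vertex and one edge, and removing leaves of a connected graph keeps it connected) shows that $H$ is itself a connected unicyclic graph on $n-m$ vertices. Writing $\ell_v \ge 0$ for the number of pendants of $U_n$ attached to $v\in V(H)$, with $\sum_v \ell_v = m$, we have
\begin{equation*}
RRR(U_n)=\sum_{uv\in E(H)}\sqrt{(d_u^H+\ell_u-1)(d_v^H+\ell_v-1)},
\end{equation*}
a sum of $n-m$ terms each at least $1$; in particular $RRR(U_n)\ge n-m$.

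I would first dispose of the case $H=C_3$, which forces $U_n$ to be a triangle with $n-3$ pendants distributed among its three vertices. Setting $a=\ell_1+1$, $b=\ell_2+1$, $c=\ell_3+1$ (so $a+b+c=n$ with $a,b,c\ge 1$), the identity
\begin{equation*}
\sqrt{ab}+\sqrt{bc}+\sqrt{ca}=\tfrac{1}{2}\bigl[(\sqrt a+\sqrt b+\sqrt c)^2-(a+b+c)\bigr]
\end{equation*}
turns the problem into minimizing $\sqrt a+\sqrt b+\sqrt c$ on the simplex. Concavity of $\sqrt{\,\cdot\,}$ on $[1,\infty)$ forces the minimum to occur at the corner $(n-2,1,1)$, giving the bound $1+2\sqrt{n-2}$ with unique extremizer $S_n^+$.

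For the complementary case $|V(H)|\ge 4$, my approach is a two-step structural reduction. A pendant-concentration lemma, again via concavity of $\sqrt{\,\cdot\,}$, shows that for any fixed skeleton $H$ the minimum of $RRR(U_n)$ over pendant distributions is achieved by heaping all pendants on a single leaf of $H$. A skeleton-simplification lemma then shows that whenever $H$ contains a cycle of length $\ge 4$, or any branching not matching the non-pendant skeleton of $H_n$ (a triangle with an attached path of length $2$), one can locally rewire $U_n$ to obtain a strictly smaller $RRR$ value. Iterating these reductions confines the optimum to a short list of explicit candidates, from which direct computation singles out $H_n$ with value $1+3\sqrt{2}+\sqrt{n-5}$.

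The hardest piece is the skeleton-simplification step: although each individual rewiring reduces to a two-term inequality between square-root expressions, the case analysis is broad (cycles of length $4$ and $5$, multi-branch caterpillars, triangles with a longer tail, and so on) and must be exhaustive. Once it is completed, the theorem follows by comparing $1+2\sqrt{n-2}$ with $1+3\sqrt{2}+\sqrt{n-5}$; the former wins precisely for $4\le n\le 16$ and the latter for $n\ge 17$, matching the two cases of the statement.
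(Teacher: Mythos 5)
Your reduction to the non-pendant skeleton $H$ is sound (pendant edges contribute $0$ to $RRR$, $H$ is connected and unicyclic on $n-m$ vertices, and the bookkeeping $d_u=d_u^H+\ell_u$ is correct), and your treatment of the case $H\cong C_3$ is a complete and rather elegant proof of the bound $1+2\sqrt{n-2}$ with equality only for $S_n^+$: the identity plus strict concavity of the square root does pin the minimum to the corner $(n-2,1,1)$. The pendant-concentration step is also essentially salvageable, since $\sqrt{xy}$ is jointly concave on the positive orthant and the $\ell_v$ enter affinely, so the minimum over the distribution simplex sits at an extreme point --- although the claim that the optimal extreme point is a \emph{leaf} of $H$ is an additional comparison you have not supplied.

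The genuine gap is the skeleton-simplification lemma, which is where the entire difficulty of the theorem lives and which you only assert. The family of skeletons is unbounded --- cycles of every length, tails of every length, arbitrarily many branches --- so ``iterating local rewirings'' requires (i) a proof that each rewiring strictly decreases $RRR$ uniformly over all attachment data, and (ii) a termination argument showing the process ends in a finite candidate list; neither is sketched, and the rewirings change both the skeleton and the pendant count $m$, so the ``two-term square-root inequalities'' are not uniform in any obvious way. Moreover your endgame is misstated: within the class $|V(H)|\ge 4$ the minimizer is \emph{not} $H_n$ for all $n$; one checks that $RRR(H_n^+)=1+2\sqrt{2}+\sqrt{2(n-4)}<1+3\sqrt{2}+\sqrt{n-5}=RRR(H_n)$ exactly when $5\le n\le 12$, so your candidate list must at least also retain $H_n^+$, and the final step cannot be a bare comparison of $1+2\sqrt{n-2}$ with $1+3\sqrt{2}+\sqrt{n-5}$. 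For contrast, the paper avoids classifying skeletons altogether: it deletes one carefully chosen pendant vertex and runs three parallel inductions on $n$ (over three families with respective extremal graphs $S_n^+$, $H_n^+$ and $H_n$), so the only structural analysis needed is local to that single vertex.
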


\begin{proof}
Routine computation yields $RRR(C_{n})=n>1+2\sqrt{n-2}$ for all $n\geq4$ and $RRR(C_{n})=n>1+3\sqrt{2}+\sqrt{n-5}$ for all $n\geq7$, so we assume $U_{n}\not\cong C_{n}$. Let $P(U_{n})=\{u_{0},u_{1},u_{2},...,u_{p-1}\}$ be the set of all pendent vertices in $U_{n}$. For $1\leq i\leq p-1$, suppose that $v_{i}$ is adjacent with $u_{i}$ and $W_{u_{i}}$ is the set of all pendent neighbors of $v_{i}$ different from $u_{i}$. Choose a member of $P(U_{n})$, say $u_{0}$ (without loss of generality), such that

\begin{enumerate}
  \item the number of elements in $W_{u_{0}}$ is as large as possible;
  \item subject to (1), $d_{v_{0}}$ is as small as possible.
\end{enumerate}
Let $d_{v_{0}}=x$ and $N(v_{0})=\{u_{0},u_{1},u_{2},...,u_{p-1},u_{p},...,u_{x-1}\}$ where $d_{u_{i}}=1$ for $0\leq i\leq p-1$ and $d_{u_{i}}\geq2$ for $p\leq i\leq x-1$ (see Fig. \ref{f1.1}).
\renewcommand{\figurename}{Fig.}
\begin{figure}[H]
   \centering
    \includegraphics[width=1.9in, height=2.0in]{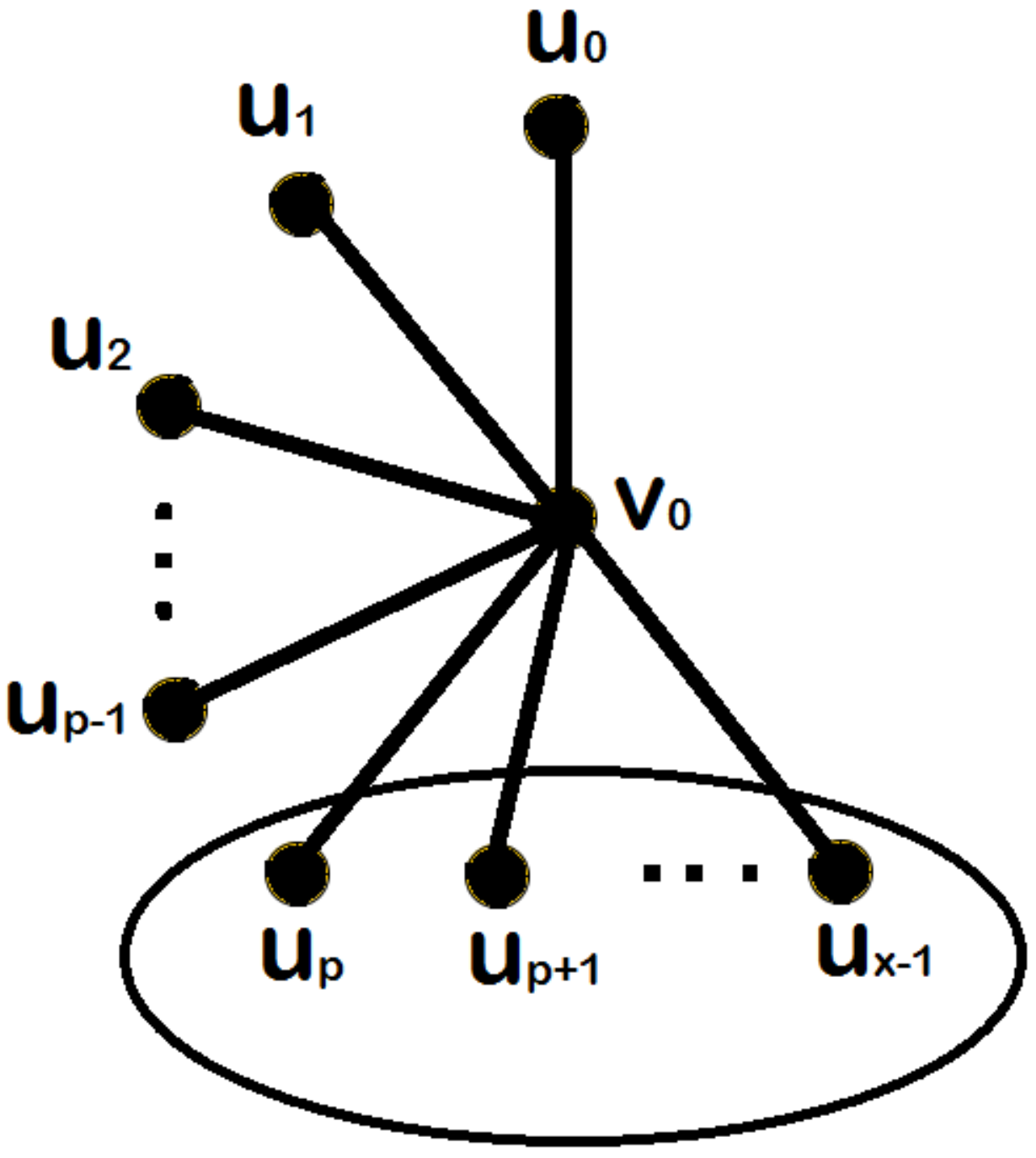}
    \caption{The presentation of an $n$-vertex unicyclic graph $U_{n}$ used in the proof of Theorem \ref{thm1}.}
    \label{f1.1}
     \end{figure}
If $U_{n-1}'$ is the graph obtained from $U_{n}$ by removing the vertex $u_{0}$, then
\begin{equation}\label{Eq.0}
RRR(U_{n})=RRR(U_{n-1}')+\displaystyle\sum_{i=1}^{x-1}\left[\sqrt{(x-1)(d_{u_{i}}-1)}-\sqrt{(x-2)(d_{u_{i}}-1)}\right]
\end{equation}
We will discuss three cases:

\textit{Case 1.} Either the vertex $v_{0}$ is adjacent with at least two non-pendent vertices or $v_{0}$ is adjacent with exactly one non-pendent vertex $u_{x-1}$ such that $d_{u_{x-1}}\geq5$ (that is either $p\leq x-2$ or $p=x-1,d_{u_{x-1}}\geq5$).\\
Let $\mathcal{U}^{(1)}_{n}$ be the collection of all those $n$-vertex unicyclic graphs (different from $C_{n}$) which fall in this case. By using induction on $n$, we will prove that the only one graph, namely $S_{n}^{+}$, has the minimum $RRR$ value among all the members of $\mathcal{U}^{(1)}_{n}$. [Then the desired result will follow from the fact that
\[RRR(S_{n}^{+})=1+2\sqrt{n-2}> 1+3\sqrt{2}+\sqrt{n-5}=RRR(H_{n}) \ \ \ \text{for all $n\geq17$}.]\]
For $n=4$, there are only two non-isomorphic unicyclic graphs namely $C_{n}$ and $S_{n}^{+}$ and hence the result holds for $n=4$. For $n=5$, all the non-isomorphic members of $\mathcal{U}^{(1)}_{n}$ are depicted in the Fig. \ref{f2} along with their $RRR$ values.
\renewcommand{\figurename}{Fig.}
\begin{figure}[H]
   \centering
    \includegraphics[width=4.6in, height=1.0in]{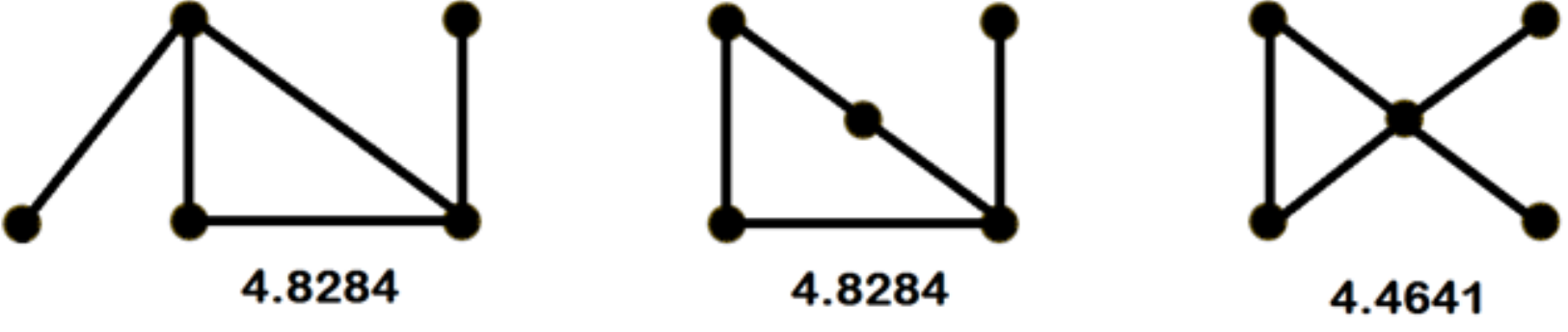}
    \caption{All the non-isomorphic members of $\mathcal{U}^{(1)}_{5}$ together with their $RRR$ values.}
    \label{f2}
     \end{figure}
Now, suppose that $U_{n}\in\mathcal{U}^{(1)}_{n}$ and $n\geq6$. By virtue of inductive hypothesis and from Equation (\ref{Eq.0}), one have
\begin{equation}\label{Eq.1}
RRR(U_{n})\geq 1+2\sqrt{n-3}+\left(\sqrt{x-1}-\sqrt{x-2}\right)\displaystyle\sum_{i=1}^{x-1}\sqrt{d_{u_{i}}-1},
\end{equation}
with equality if and only if $U_{n-1}'\cong S_{n-1}^{+}$.
We discuss two subcases:

\textit{Subcase 1.1.} If $p\geq2$. Then from Inequality (\ref{Eq.1}) it follows that
\begin{equation}\label{Eq.2}
RRR(U_{n})\geq 1+2\sqrt{n-3}+\left(\sqrt{x-1}-\sqrt{x-2}\right)\displaystyle\sum_{i=p}^{x-1}\sqrt{d_{u_{i}}-1}
\end{equation}
According to the definition of $U_{n}\in\mathcal{U}^{(1)}_{n}$, either $p\leq x-2$ or $p=x-1,d_{u_{x-1}}\geq5$. If $p\leq x-2$, then Inequality (\ref{Eq.2}) implies that
\begin{eqnarray*}
RRR(U_{n})&\geq& 1+2\sqrt{n-3}+\left(\sqrt{x-1}-\sqrt{x-2} \ \right)(x-p)\\
&\geq&1+2\sqrt{n-3}+2\left(\sqrt{x-1}-\sqrt{x-2} \ \right)\\
&\geq& 1+2\sqrt{n-3}+2\left(\sqrt{n-2}-\sqrt{n-3} \ \right)=1+2\sqrt{n-2}.
\end{eqnarray*}
The equality $RRR(U_{n})=1+2\sqrt{n-2}$ holds if and only if $x=n-1,x-p=2$ and $U_{n-1}'\cong S_{n-1}^{+}$.

If $p=x-1$ and $d_{u_{x-1}}\geq5$, then the graph $U_{n-1}'$ must be different from $S_{n-1}^{+}$ and hence from Equation (\ref{Eq.0}), one have
\begin{eqnarray*}
RRR(U_{n})&>&1+2\sqrt{n-3}+2\left(\sqrt{x-1}-\sqrt{x-2} \ \right)\\
&>& 1+2\sqrt{n-3}+2\left(\sqrt{n-3}-\sqrt{n-4} \ \right) \ \ (\text{since in this case $x<n-2$})\\
&>&1+2\sqrt{n-2}.
\end{eqnarray*}

\textit{Subcase 1.2.} If $p=1$. Then, from the definition of $u_{0}$, it follows that the set $W_{u_{i}}$ is empty for all $u_{i}\in P(U_{n})$. It means that no two pendent edges are adjacent.

If $x\geq4$, then among the vertices $u_{1},u_{2},...,u_{x-1}$ at least two are disconnected in $U_{n}-v_{0}$ (because otherwise $U_{n}$ contains more than one cycle, a contradiction. See Fig. \ref{f2.1} for the graphs $U_{n}-v_{0}$ and $U_{n}$ considered in this subcase).
\renewcommand{\figurename}{Fig.}
\begin{figure}[H]
   \centering
    \includegraphics[width=3.9in, height=2.0in]{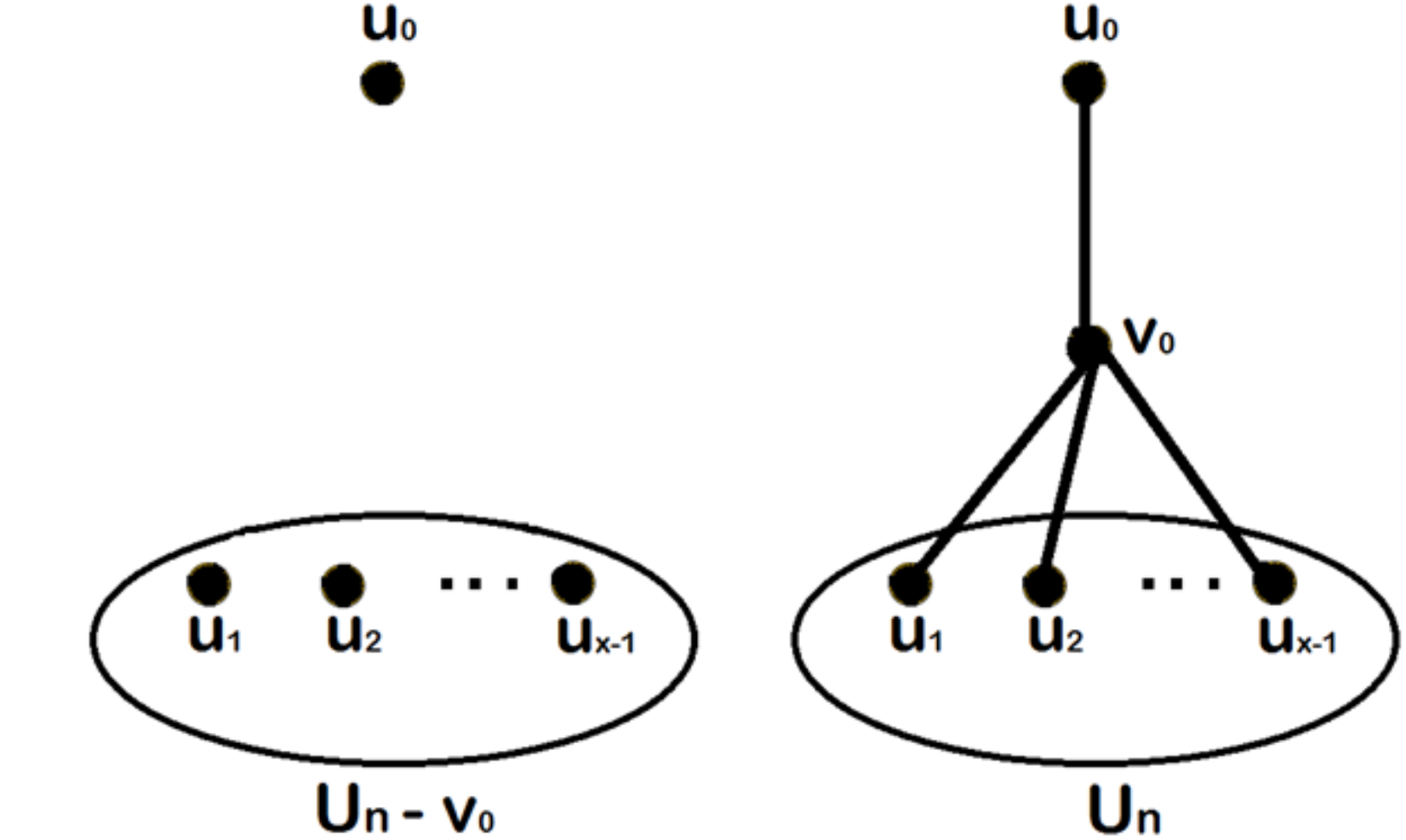}
    \caption{The graphs $U_{n}$ and $U_{n}-v_{0}$ used in Subcase 1.2 of the proof of Theorem \ref{thm1}.}
    \label{f2.1}
     \end{figure}
Without loss of generality, let $u_{1}$ and $u_{2}$ are disconnected in $U_{n}-v_{0}$ and suppose that $C_{i}$ (for $i=1,2$) is the component of $U_{n}-v_{0}$ containing $u_{i}$ (for $i=1,2$). Since $d_{u_{i}}\geq2$ in $U_{n}$ for all $i$ ($1\leq i\leq x-1$), so both the components $C_{1}$ and $C_{2}$ must be non-trivial. Note that at least one of $C_{1}$ and $C_{2}$ must be a tree (for otherwise $U_{n}$ contains more than one cycle, a contradiction). Let $C_{1}$ is a tree. Since every non-trivial tree contains at least two pendent vertices and no two pendent edges of $U_{n}$ are adjacent, so there exist $w_{1}\in V(C_{1})\cap P(U_{n})$ and $w_{1}w_{2}\in V(C_{1})\cap E(U_{n})$ such that $d_{w_{2}}=2$ in $U_{n}$, which contradicts the definition of $u_{0}$. Hence $x=2$ or 3. It should be noted that the graph $U_{n-1}'$ is different from $S_{n-1}^{+}$ in this case. Now, we consider further two subcases:

\textit{Subcase 1.2.1.} If $x=2$. Then from the Inequality (\ref{Eq.0}), we have
\begin{eqnarray*}
RRR(U_{n})&>& 1+2\sqrt{n-3}+\sqrt{d_{u_{1}}-1}\\
&\geq&3+2\sqrt{n-3}>1+2\sqrt{n-2}.
\end{eqnarray*}

\textit{Subcase 1.2.2.} If $x=3$. Then from the Inequality (\ref{Eq.0}), it follows that
\begin{eqnarray*}
RRR(U_{n})&>& 1+2\sqrt{n-3}+\left(\sqrt{2}-1\right)\left(\sqrt{d_{u_{1}}-1}+\sqrt{d_{u_{2}}-1} \ \right)\\
&\geq&1+2\sqrt{n-3}+2\left(\sqrt{2}-1\right)\\
&>&1+2\sqrt{n-2}, \ \ \text{because $n\geq6$}.
\end{eqnarray*}
Therefore, for any $U_{n}\in\mathcal{U}^{(1)}_{n}$ we have $RRR(U_{n})\geq RRR(S_{n}^{+})$ with equality if and only if $U_{n}\cong S_{n}^{+}$.

\textit{Case 2.} The vertex $v_{0}$ is adjacent with exactly one non-pendent vertex $u_{x-1}$ such that $d_{u_{x-1}}=3$ or 4 (that is $p=x-1$ and $d_{u_{x-1}}=3$ or 4).\\
Let $\mathcal{U}^{(2)}_{n}$ be the family of all those $n$-vertex unicyclic graphs (different from $C_{n}$) which fall in this case. Note that $n$ must be at least five in this case. By using induction on $n$, we will prove that the only one graph, namely $H_{n}^{+}$, has the minimum $RRR$ value among all the members of $\mathcal{U}^{(2)}_{n}$. [Then the desired result will follow from the fact that
\[RRR(H_{n}^{+})=1+2\sqrt{2}+\sqrt{2(n-4)}>
\begin{cases}
        1+2\sqrt{n-2} & \text{for $5\leq n\leq16$,}\\
        1+3\sqrt{2}+\sqrt{n-5} & \text{for $n\geq17$.}]
\end{cases}
\]
It can be easily seen that $\mathcal{U}^{(2)}_{5}$ has only one element namely $H_{5}^{+}$. For $n=6$, all the non-isomorphic members of $\mathcal{U}^{(2)}_{n}$ are depicted in the Fig. \ref{f3} along with their $RRR$ values.
\renewcommand{\figurename}{Fig.}
\begin{figure}[H]
   \centering
    \includegraphics[width=3.5in, height=1.92in]{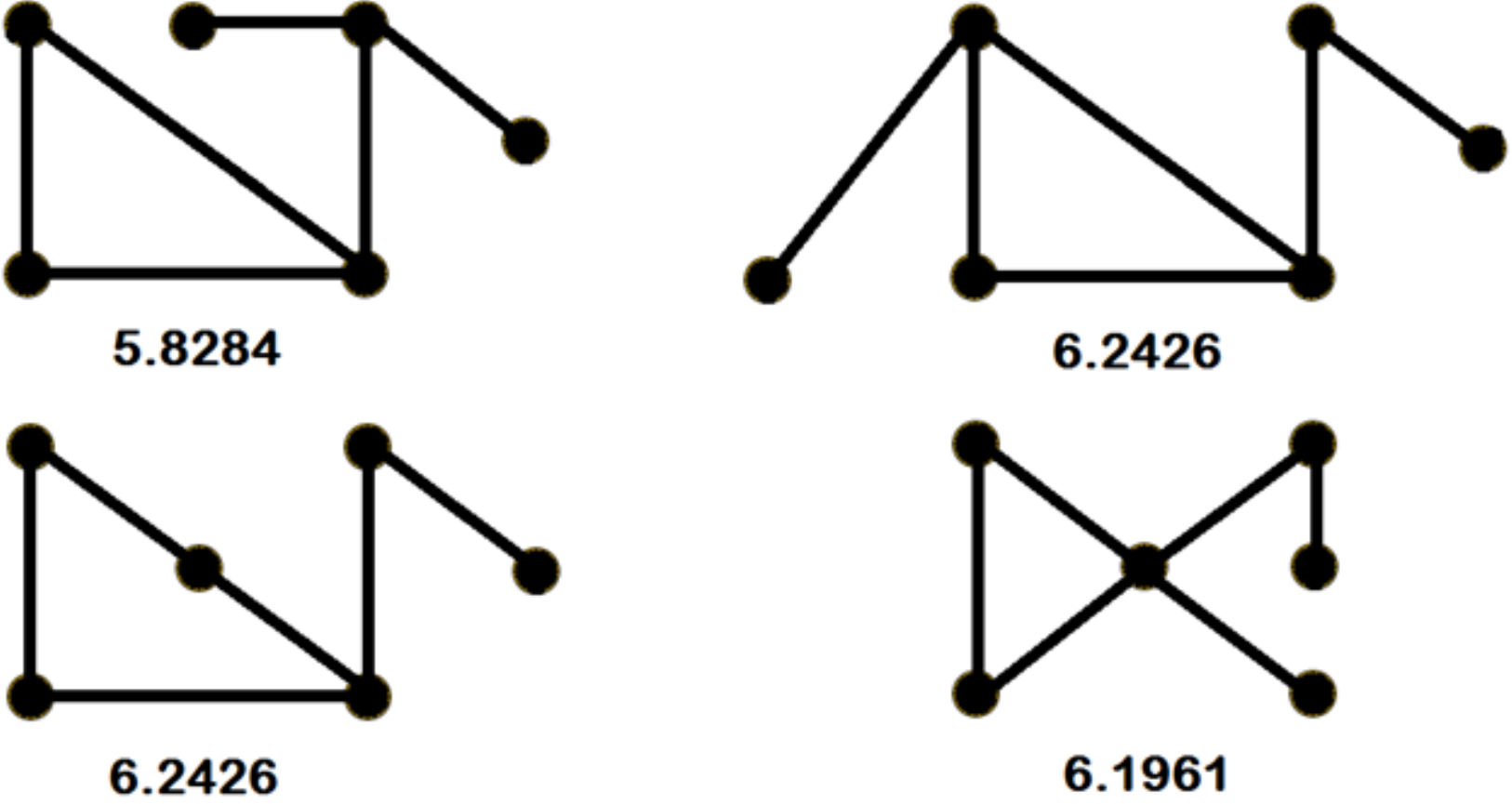}
    \caption{All the non-isomorphic members of $\mathcal{U}^{(2)}_{6}$ together with their $RRR$ values.}
    \label{f3}
     \end{figure}
Hence the result holds for $n=5,6$. Suppose that $U_{n}\in\mathcal{U}^{(2)}_{n}$ and $n\geq7$. By using the inductive hypothesis in the Equation (\ref{Eq.0}), we have
\begin{equation}\label{Eq.11}
RRR(U_{n})\geq 1+2\sqrt{2}+\sqrt{2(n-5)}+\left(\sqrt{x-1}-\sqrt{x-2}\right)\displaystyle\sum_{i=1}^{x-1}\sqrt{d_{u_{i}}-1},
\end{equation}
with equality if and only if $U_{n-1}'\cong H_{n-1}^{+}$.
We consider two subcases:

\textit{Subcase 2.1.} If $p\geq2$. Then from Inequality (\ref{Eq.11}) it follows that
\begin{equation}\label{Eq.22}
RRR(U_{n})\geq 1+2\sqrt{2}+\sqrt{2(n-5)}+\left(\sqrt{x-1}-\sqrt{x-2}\right)\displaystyle\sum_{i=p}^{x-1}\sqrt{d_{u_{i}}-1}
\end{equation}
According to the definition of $U_{n}\in\mathcal{U}^{(2)}_{n}$, $p=x-1$ and $d_{u_{x-1}}=3$ or 4. Hence from Equation (\ref{Eq.22}), it follows that
\begin{eqnarray*}
RRR(U_{n})&\geq&1+2\sqrt{2}+\sqrt{2(n-5)}+\left(\sqrt{x-1}-\sqrt{x-2} \ \right)\sqrt{d_{u_{x-1}}-1}\\
&\geq& 1+2\sqrt{2}+\sqrt{2(n-5)}+\sqrt{2}\left(\sqrt{x-1}-\sqrt{x-2} \ \right)\\
&\geq& 1+2\sqrt{2}+\sqrt{2(n-5)}+\sqrt{2}\left(\sqrt{n-4}-\sqrt{n-5} \ \right) \ (\text{since $x\leq n-3$})\\
&=&1+2\sqrt{2}+\sqrt{2(n-4)}.
\end{eqnarray*}
The equality $RRR(U_{n})=1+2\sqrt{2}+\sqrt{2(n-4)}$ holds if and only if $x=n-3,d_{u_{x-1}}=3$ and $U_{n-1}'\cong H_{n-1}^{+}$.

\textit{Subcase 2.2.} If $p=1$. Then, $x=2$ because $p=x-1$. It is easy to see that the graph $U_{n-1}'$ is different from $H_{n-1}^{+}$ in this case. From the Inequality (\ref{Eq.0}), we have
\begin{eqnarray*}
RRR(U_{n})&>& 1+2\sqrt{2}+\sqrt{2(n-5)}+\sqrt{d_{u_{1}}-1}\\
&\geq&1+3\sqrt{2}+\sqrt{2(n-5)}>1+2\sqrt{2}+\sqrt{2(n-4)}.
\end{eqnarray*}
Therefore, for any $U_{n}\in\mathcal{U}^{(2)}_{n}$ we have $RRR(U_{n})\geq RRR(H_{n}^{+})$ with equality if and only if $U_{n}\cong H_{n}^{+}$.

\textit{Case 3.} The vertex $v_{0}$ is adjacent with exactly one non-pendent vertex $u_{x-1}$ such that $d_{u_{x-1}}=2$ (that is $p=x-1$ and $d_{u_{x-1}}=2$).\\
Let $\mathcal{U}^{(3)}_{n}$ be the class of all those $n$-vertex unicyclic graphs (different from $C_{n}$) which fall in this case. Note that $n$ must be at least 6 in this case. By using induction on $n$, we will prove that the only one graph, namely $H_{n}$, has the minimum $RRR$ value among all the members of $\mathcal{U}^{(3)}_{n}$. [Then the desired result will follow from the fact that
\[RRR(H_{n})=1+3\sqrt{2}+\sqrt{n-5}>1+2\sqrt{n-2} \ \text{ \ for \ \ } 6\leq n\leq16.]\]
It can be easily seen that $\mathcal{U}^{(3)}_{6}$ has only one member, namely $H_{6}$. For $n=7$, all the non-isomorphic members of $\mathcal{U}^{(3)}_{n}$ are depicted in the Fig.\ref{f4} along with their $RRR$ values.
\renewcommand{\figurename}{Fig.}
\begin{figure}[H]
   \centering
    \includegraphics[width=4.8in, height=1.8in]{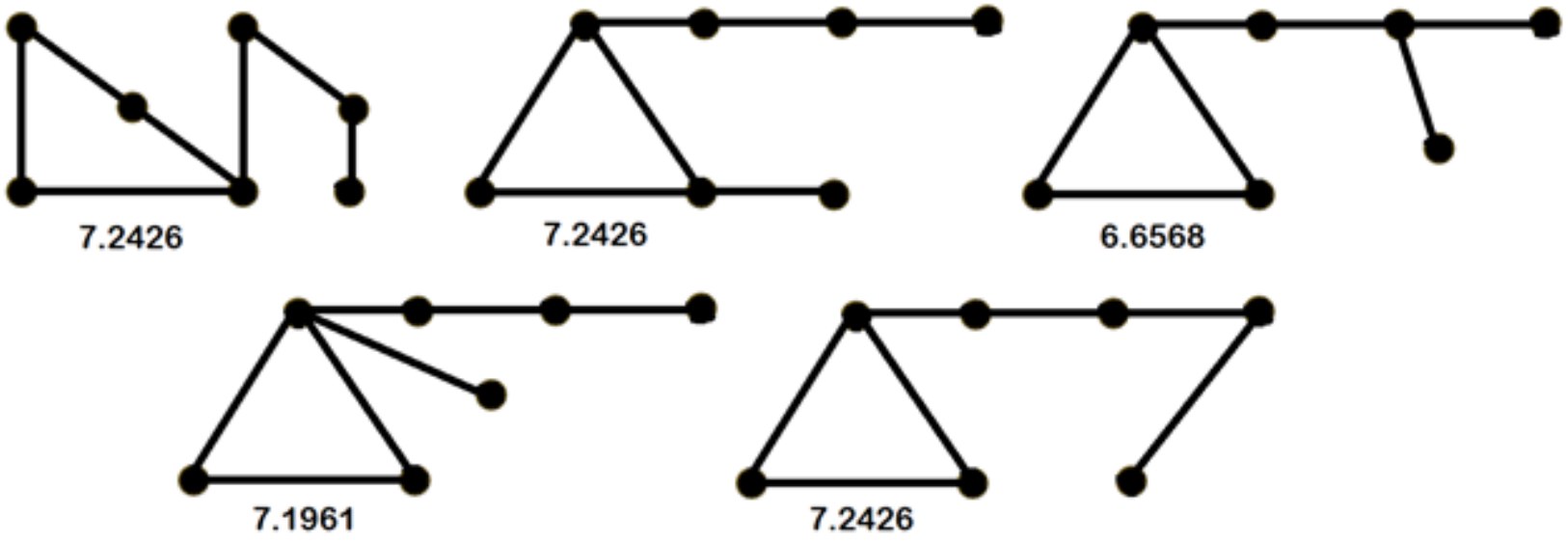}
    \caption{All the non-isomorphic members of $\mathcal{U}^{(3)}_{7}$ together with their $RRR$ values.}
    \label{f4}
     \end{figure}
Hence the result holds for $n=6,7$. Suppose that $U_{n}\in\mathcal{U}^{(3)}_{n}$ and $n\geq8$. By virtue of inductive hypothesis and from Equation (\ref{Eq.0}), one have
\begin{equation}\label{Eq.111}
RRR(U_{n})\geq 1+3\sqrt{2}+\sqrt{n-6}+\left(\sqrt{x-1}-\sqrt{x-2}\right)\displaystyle\sum_{i=1}^{x-1}\sqrt{d_{u_{i}}-1},
\end{equation}
with equality if and only if $U_{n-1}'\cong H_{n-1}$.
We consider two subcases:

\textit{Subcase 3.1.} If $p\geq2$. Then from Inequality (\ref{Eq.111}) it follows that
\begin{equation}\label{Eq.222}
RRR(U_{n})\geq 1+3\sqrt{2}+\sqrt{n-6}+\left(\sqrt{x-1}-\sqrt{x-2}\right)\displaystyle\sum_{i=p}^{x-1}\sqrt{d_{u_{i}}-1}
\end{equation}
By the definition of $U_{n}\in\mathcal{U}^{(3)}_{n}$, $p=x-1$ and $d_{u_{x-1}}=2$. Hence from Equation (\ref{Eq.222}), it follows that
\begin{eqnarray*}
RRR(U_{n})&\geq&1+3\sqrt{2}+\sqrt{n-6}+\sqrt{x-1}-\sqrt{x-2}\\
&\geq& 1+3\sqrt{2}+\sqrt{n-5}.
\end{eqnarray*}
The last inequality holds because $x\leq n-4$. Furthermore, the equality $RRR(U_{n})=1+3\sqrt{2}+\sqrt{n-5}$ holds if and only if $x=n-4$ and $U_{n-1}'\cong H_{n-1}$.

\textit{Subcase 3.2.} If $p=1$. Then, $x=2$ because $p=x-1$. Observe that the graph $U_{n-1}'$ is different from $H_{n-1}$ in this case. From the Inequality (\ref{Eq.0}), we have
\begin{eqnarray*}
RRR(U_{n})&>& 1+3\sqrt{2}+\sqrt{n-6}+\sqrt{d_{u_{1}}-1}\\
&=&2+3\sqrt{2}+\sqrt{n-6}>1+3\sqrt{2}+\sqrt{n-5}.
\end{eqnarray*}
Therefore, for any $U_{n}\in\mathcal{U}^{(3)}_{n}$ we conclude that $RRR(U_{n})\geq RRR(H_{n})$ with equality if and only if $U_{n}\cong H_{n}$. This completes the proof.

\end{proof}

\end{document}